\newtheorem{theorem}{Theorem}     %[section]
\newtheorem{conjecture}[theorem]{Conjecture}
\def\om{\omm}
\def\omm{\rho}
\def\R{\mathbb{R}}
\def\N{\mathbb{N}}
\newcounter{rot}
\title[The 1-2-3 Conjecture for Hypergraphs]
{The 1-2-3 Conjecture for Hypergraphs}
\author[M. Kalkowski]{Maciej Kalkowski}
\curraddr[M. Kalkowski]{Adam Mickiewicz University\\ Faculty of Mathematics and
 Computer Science\\ Pozna\'n, Poland}
\email{kalkos@atos.wmid.amu.edu.pl}
\author[M. Karo\'nski]{Micha{\l} Karo\'nski}
\curraddr[M. Karo\'nski]{Adam Mickiewicz University\\ Faculty of Mathematics and
 Computer Science\\ Pozna\'n, Poland\\ and Emory University\\
Department of Mathematics and Computer Science\\
Atlanta, GA, USA}
\email{karonski@amu.edu.pl}
\author[F. Pfender]{Florian Pfender}
\curraddr[F. Pfender]{University of Colorado Denver\\ Department of Mathematical and Statistical Sciences\\ Denver, CO, USA}
\email{Florian.Pfender@ucdenver.edu}
\thanks{Research of the third author supported in part by a Simons Foundation Collaboration Grant}
\keywords {irregular hypergraph labelings}
\subjclass {05C78, (05C15)}
\begin{document}

\begin{abstract}
A weighting of the edges of a hypergraph is called vertex-coloring if the weighted degrees of the vertices
yield a proper coloring of the graph, i.e., every edge contains at least two vertices with different weighted degrees. In this paper we show that such a weighting is possible
from the weight set $\{ 1,2 ,\ldots,\max\{5,r+1\}\}$ for all hypergraphs with maximum edge size $r\ge 3$ and not containing edges solely consisting of identical vertices. The number $r+1$ is best possible for this statement.
\end{abstract}

\maketitle

\section{Introduction and Notation}
Regular graphs have been studied in a lot of contexts, and have many properties not shared by other graphs. One may ask what is on the other side of the spectrum, and look for graphs which are as irregular as possible. But what is irregular? It is an easy observation that every graph with at least two vertices contains a pair of vertices of equal degree, so one can not hope for graphs which are {\em totally irregular} in the sense that all vertices have pairwise different degrees. This changes if one considers multigraphs. In fact, by multiplying some edges, one can make every graph totally irregular, as long as the original graph does not contain an isolated edge or two isolated vertices. This observation led to the definition of the {\em irregularity strength} of a graph in~\cite{CJLORS}, the minimum maximum multiplicity one has to use on a given graph.

Later, Karo\'{n}ski, {\L}uczak and Thomason~\cite{KLT} asked a similar question inspired by this concept. What if we do not require {\em all} vertices to have pairwise different degrees, but only require this difference for {\em adjacent} vertices? In other words, we want that the degrees yield a proper vertex coloring. This question led to the so called 1-2-3-Conjecture, stated here in the obviously analogous form using edge weights instead of multiplicities.
\begin{conjecture}\label{123}
For every graph $G$ without isolated edges, there is a weighting $\omm: E(G)\to \{1,2,3\}$,
such that the induced vertex weights $\omm(v):=\sum_{e \ni v}\omm(e)$ properly color $V(G)$.
\end{conjecture}
The 1-2-3-Conjecture is known to be true for several classes of graphs, the best known result for general graphs is by the authors of the current article~\cite{KKP}.
\begin{theorem}\label{125}
For every graph $G$ without isolated edges, there is a weighting $\omm: E(G)\to \{1,2,3,4,5\}$,
such that the induced vertex weights $\omm(v):=\sum_{e \ni v}\omm(e)$ properly color $V(G)$.
\end{theorem}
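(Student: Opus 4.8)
The plan is to build the weighting greedily, processing the vertices one at a time and fixing the weighted degree (the ``colour'') of each vertex as we reach it. First I would dispose of trivialities: a weighting can be built independently on each connected component, isolated vertices impose no constraint, and the only components that must be excluded are single edges $K_2$, which is exactly the isolated-edge hypothesis. So I may assume $G$ is connected with at least three vertices, and I fix a vertex ordering $v_1,\ldots,v_n$ in which every vertex other than $v_1$ has at least one earlier neighbour (a connected, e.g.\ BFS, ordering). I then orient every edge from its earlier to its later endpoint, so that each $v_i$ splits its neighbours into \emph{back-neighbours} (already coloured when we reach $v_i$) and \emph{forward-neighbours} (still free).

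The heart of the argument is an invariant maintained throughout the sweep $v_1,\ldots,v_n$. I keep every edge weight in a small fixed range and, crucially, I do not commit a processed vertex to a single colour but to a short \emph{window} of admissible colours, concretely two consecutive integers $\{c(v),c(v)+1\}$, both of which are guaranteed to differ from the windows of the already-coloured neighbours of $v$. The purpose of carrying a two-value window is that when a later vertex $v_j$ is processed it is permitted to nudge the weight of the shared edge $v_iv_j$ by one, which shifts $v_i$'s weighted degree by one; because $v_i$ was left with this slack, the nudge keeps $v_i$ properly coloured against all of its neighbours. Thus each edge absorbs a bounded adjustment from \emph{each} of its two endpoints, and it is precisely this twice-adjusted budget, a base weight plus a bounded contribution chosen at each endpoint, that forces the weight range up to $\{1,2,3,4,5\}$ rather than $\{1,2,3\}$.

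Carrying out a single step then reduces to a counting check. When I reach $v_i$, the windows of its back-neighbours forbid only a bounded number of values for $v_i$'s degree; on the other hand, by choosing the initial weights of $v_i$'s forward edges, and where necessary by adjusting back edges within the reserved slack of their other endpoints, I can move $v_i$'s weighted degree over a contiguous range of candidate values. I would show that this attainable range is always strictly larger than the set of values forbidden by back-neighbours, so that a valid window survives, and I then record for $v_i$ the reserved $+1$ of slack that its forward-neighbours are later allowed to spend. A short parity argument ensures that among the attainable degrees one can always select two consecutive integers avoiding the forbidden set.

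The main obstacle I expect is exactly the bookkeeping that keeps the colour windows conflict-free under these two-sided adjustments: I must guarantee that the slack promised to a vertex is never over-spent, so that each vertex is only ever pushed by its forward-neighbours within its reserved interval, and that low-degree vertices still have enough room. The delicate cases are degree-one vertices and the last few vertices in the ordering, which have few or no forward edges to supply flexibility. Controlling these boundary cases, by leaning on the connected ordering so that every vertex past the first has a back-edge to exploit, and by a careful accounting of how far each endpoint may shift each edge, is where the real content of the proof lies; the remainder is the greedy sweep described above.
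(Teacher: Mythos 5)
Your overall architecture is the one the paper uses (the proof of Theorem~\ref{Thm:Main} reproduces the argument from~\cite{KKP}): a connected ordering in which every vertex but the last has a later neighbour, a greedy sweep that commits each processed vertex to a two-element colour window rather than a single colour, each edge adjusted once by each endpoint, and a separate treatment of the final vertex. But the quantitative design of your windows breaks the central counting step, and that step is precisely what you defer. With windows of two \emph{consecutive} integers $\{c(v),c(v)+1\}$ and unit nudges on back-edges, when you reach a vertex $v_k$ with $d$ back-neighbours the attainable values of $\omm(v_k)$ form an interval of length only about $d+O(1)$ (each back-edge contributes one extra value, the forward edges a bounded amount, and the last vertex has no forward edges at all while $d$ can be arbitrarily large), whereas disjointness from $d$ consecutive-integer windows forbids up to $3d$ values of $c(v_k)$ (namely $c(v_k)\notin\{c(v_i)-1,c(v_i),c(v_i)+1\}$ for each back-neighbour). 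Forbidden grows like $3d$, attainable like $d$; the greedy step fails already for moderate degree. There is also no parity structure to exploit, since a consecutive window contains both parities.

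The missing idea is the specific arithmetic of the windows: take $W(v)=\{w(v),w(v)+2\}$ with $w(v)\in\{0,1\}\bmod 4$, let each back-edge be adjustable by $\pm 2$ (the direction dictated by which element of $W(v_i)$ the neighbour currently sits on), and reserve a single $\pm 1$ adjustment on one \emph{forward} edge of $v_k$ to toggle parity. Then the attainable values of $\omm(v_k)$ fill an interval of $2d+3$ consecutive integers, any two \emph{distinct} windows of this form are automatically disjoint, so each back-neighbour forbids only the two values $w(v_i)$ and $w(v_i)+2$ (a total of $2d$), and the edge-weight bookkeeping kills two more, leaving at least one admissible choice. This is also exactly where the weight set $\{1,\ldots,5\}$ comes from: base weight $3$, shifted by $\pm 1$ at the earlier endpoint into $\{2,3,4\}$ and by $\pm 2$ at the later endpoint into $\{1,\ldots,5\}$ — not, as in your accounting, a $\pm 1$ contribution from each endpoint. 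Finally, the last vertex (degree at least $2$ in a suitable ordering) must be handled by a short case analysis on the residue mod $4$ of the smallest attainable value; flagging it as ``delicate'' is not enough, since that is the one place where no forward edge is available.
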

Shortly thereafter, a total version of the 1-2-3-Conjecture, adaptly called the 1-2-Conjecture, was formulated by Przyby{\l}o and Wo\'{z}niak~\cite{PW}.
\begin{conjecture}\label{12con}
For every graph $G$, there is a weighting $\omm: E(G)\cup V(G)\to \{1,2\}$,
such that the induced total vertex weights $w(v):=\omm(v)+\sum_{e \ni v}\omm(e)$ properly color $V(G)$.
\end{conjecture}
Kalkowski in~\cite{K} came close to settling this conjecture.
\begin{theorem}\label{tot123}
For every graph $G$, there are weightings $\omm: E(G)\to \{1,2,3\}$ and $\omm': V(G)\to \{1,2\}$
such that the induced total vertex weights $w(v):=\omm'(v)+\sum_{e \ni v}\omm(e)$ properly color $V(G)$.
\end{theorem}
One natural and promising approach for both conjectures is the use of Alon's Combinatorial Nullstellensatz (see~\cite{A}). In its most straightforward application, it would prove list versions of the conjectures if successful, leading to the following stronger conjectures, first stated by Bartnicki, Grytczuk and Niwczyk, and by Przyby{\l}o and Wo\'{z}niak, and Wong and Zhu, respectively.
\begin{conjecture}\label{123list}\cite{BGN}
For every graph $G$ without isolated edges, and for every assignment of lists of size $3$ to the edges of $G$, there exists a weighting $\omm: E(G)\to \R$ from the lists,
such that the induced vertex weights $\omm(v):=\sum_{e \ni v}\omm(e)$ properly color $V(G)$.
\end{conjecture}
\begin{conjecture}\label{12list}\cite{PW2},\cite{WZ}
For every graph $G$, and for every assignment of lists of size $2$ to the vertices and edges of $G$, there exists a weighting $\omm: V(G)\cup E(G)\to \R$ from the lists,
such that the induced total vertex weights $w(v):=\omm(v)+\sum_{e \ni v}\omm(e)$ properly color $V(G)$.
\end{conjecture}
Conjecture~\ref{123list} is open even if we allow larger lists of some fixed size $k$. For Conjecture~\ref{12list}, the best result due to Zhu and Wong in~\cite{ZW} generalizes Theorem~\ref{tot123}.
\begin{theorem}\label{tot123list}
For every graph $G$, and for every assignment of lists of size $2$ to the vertices and of size $3$ to the edges of $G$, there exists a weighting $\omm: V(G)\cup E(G)\to \R$ from the lists,
such that the induced total vertex weights $w(v):=\omm(v)+\sum_{e \ni v}\omm(e)$ properly color $V(G)$.
\end{theorem}
All these questions also make sense for hypergraphs when the graph $G$ in the statements is replaced by a hypergraph $H$.
%If we allow edges to contain more than two vertices, we get from graphs to hypergraphs. 
%Both questions above make sense for hypergraphs as well. 
Note that it is easy to construct totally irregular hypergraphs, so the irregularity strength of a hypergraph may actually be $1$ in certain cases. In this manuscript, we want to first consider Conjecture~\ref{123} for hypergraphs. 

To start, we have to decide what we mean by a proper vertex coloring of a hypergraph as there are differing notions. We will consider the weakest notion and call a hypergraph properly colored if it does not contain a monochromatic edge, i.e. an edge containing only vertices from one color class.

Next, we have to classify all hypergraphs which do not allow a vertex coloring edge weighting at all. What is the analogon of an isolated edge in the graph case? We will call a set of vertices of any cardinality {\em twin set} if the vertices in the set are contained in the exact same set of edges. With this notion, it is easy to verify that the only obstacle is an edge consisting of a twin set. In the absence of such edges, a vertex coloring edge weighting with integer weights is always possible. So we will ask for such graphs, what is the minimum maximum edge weight we have to use?

Going from graphs to hypergraphs, one discovers several important classes of hypergraphs invisible in the graph case, we will consider three special classes. A hypergraph is called {\em $k$-uniform} if all its edges have size $k$. If any two edges in a hypergraph intersect in at most one vertex, we call the hypergraph {\em linear} (this property is also called {\em simple} in other places). Note that graphs are exactly the $2$-uniform linear hypergraphs.
A hypergraph is called {\em bipartite} if it allows a proper $2$-coloring. 

In general, we allow multiple edges in our hypergraphs to make some operations easier to state. 
%For two vertices $u,v\in V(H)$, their distance is the minimum $k$ such that there exists a path $u=x_0x_1\ldots x_k=v$, where ${x_i,x_{i+1}}\subseteq e_i$ and $e_i\in E(H)$ for $0\le i\le k-1$.
%
Starting with a hypergraph $H$ with vertex set $V(H)$ and edge set $E(H)$ and a vertex $v\in V(H)$, we define the hypergraph $H-v$ (the deletion of $v$) as the hypergraph with
\begin{align*}
V(H-v)&=V(H)\setminus\{v\},\\
E(H-v)&=\{e\setminus\{v\}:e\in E(H)\}.
\end{align*}
In other words, we delete $v$ from every edge, and we keep the resulting edges.

On the other hand, for $X\subseteq V(H)$, we may consider the {\em induced} hypergraph $H[X]$ with
 \begin{align*}
V(H[X])&=X,\\
E(H[X])&=\{e\in E(H):e\subseteq X\}.
\end{align*}
This time, we only allow edges completely contained in the smaller vertex set.

In the next section we provide some hypergraphs giving lower bounds for a number replacing the $3$ in the 1-2-3-Conjecture. In particular, we show that the statement of the 1-2-3-Conjecture can not be true for general hypergraphs. In fact, it would fail even for linear bipartite hypergraphs.

In the third section, we present the main result of the paper---an upper bound for the hypergraph version of the 1-2-3-Conjecture. We will get a bound for hypergraphs depending linearly on the size $r$ of the largest edge, which matches our lower bound as long as $r\ge 4$. %For non-linear hypergraphs our bound is weaker, but still linear.
In Section~\ref{total}, we will turn to Conjecture~\ref{12con}, and extend Theorem~\ref{tot123} to hypergraphs.

%In a final section, we consider the related topic of total weightings.

\section{Lower Bounds}
Let $F$ be any hypergraph with vertex set $V(F)$ and edge set $E(F)$, and suppose that $F$ has minimum degree at least $2$ and no edges of size smaller than $2$. From this, we create another hypergraph $H$, the dual of the incidence graph of $F$, as follows. Let $V(H)$ consist of the vertex-edge incidences in $F$, i.e., pairs $(v,e)$ where $v\in V(F)$, $e\in E(F)$ and $v\in e$. Let 
\begin{align*}
E_1(H)&= \bigcup_{v\in V(F)}\{\{(v,e)\in V(H):e\in E(F)\}\},\\
E_2(H)&= \bigcup_{e\in E(F)}\{\{(v,e)\in V(H):v\in V(F)\}\},\\
E(H)&=E_1(H)\cup E_2(H).
\end{align*}
With this construction, $H$ is linear and $2$-regular, the largest edge in $E_1(H)$ has size equal to the maximum degree in $F$, and the largest edge in $E_2(H)$ has size equal to the largest edge in $F$. 
%Note that for every shortest path $(v_0,e_0)(v_1,e_1)\ldots (v_k,e_k)$ in $H$ and $0\le i<j\le k$, we have 
%\[|\{v_i,e_i\}\cap\{v_{j},e_{j}\}|=
%\begin{cases}
%1,&\mbox{if }j=i+1,\\
%0,&\mbox{if }j>i+1.
%\end{cases}
%\]
%
%As $H$ does not contain any odd cycles, $H$ is bipartite: start with any vertex $(v,e)\in V(H)$, and partition the remaining vertices
%in the same component by the parity of their distance to $(v,e)$. 
Further, $H$ is bipartite as we can properly $2$-color $H$ as follows. Let $G$ be the graph underlying $H$, i.e. $V(G)=V(H)$ and $(v,e)(v',e')\in E(G)$ if and only if $|\{v,e\}\cap\{v',e'\}|=1$. As $H$ is linear, we can greedily find a subgraph $G'\subseteq G$ such that every edge in $E(H)$ contains exactly one edge in $G'$. As $H$ is $2$-regular, $G'$ has maximum degree at most $2$. Further, the edges of $G'$ naturally partition into two matchings $E_1(G')$ and $E_2(G')$ by their association with $E_1(H)$ and $E_2(H)$. Therefore, $G'$ has no odd cycles, and we can find a proper 2-vertex-coloring of $G'$, which is also a proper 2-vertex-coloring of $H$ by construction.

Further, suppose that $F$ has chromatic number $\chi(F)$, and consider any weighting $\omm: E(H)\to \{1,2,\ldots,\chi(F)-1\}$.  
Note that every edge $h_1=\{(v,e_1),(v,e_2),\ldots,(v,e_r)\}\in E_1(H)$ corresponds to the vertex $v\in V(F)$, and every edge 
$h_2=\{(v_1,e),(v_2,e),\ldots,(v_s,e)\}\in E_2(H)$ corresponds to the edge $e\in E(F)$. With this correspondence, we can define 
$\omm(v):=\omm(h_1)$ and $\omm(e):=\omm(h_2)$, and then
\[
\omm((v,e))=\sum_{h\ni (v,e)}\omm(h)=\omm(v)+\omm(e).
\]
Thus, the edge $h_2$ from above is monochromatic if and only if all the $\omm(v_i)$ are the same. If $\omm$ induces a proper coloring on the vertices of $H$, then $\omm$ has to be a proper coloring on the vertices of $F$, a contradiction. Therefore, $\omm$ cannot induce a proper vertex coloring of $H$.

This construction gives us several lower bounds. If we start with a complete graph $F$ on $r+1$ vertices, we obtain a hypergraph with maximum edge size $r$,  which needs a weight set of at least $\{1,2,\ldots,r+1\}$ on the edges to properly color the vertices. Starting with any other $r$-regular graph $F$ with chromatic number $r$,  we obtain a hypergraph with maximum edge size $r$,  which needs a weight set of at least $\{1,2,\ldots,r\}$ on the edges to properly color the vertices.

If we start with the Fano plane (or any other $3$-regular $3$-uniform non-bipartite hypergraph), we obtain a $2$-regular $3$-uniform hypergraph, which needs a weight set of at least $\{1,2,3\}$ on the edges to properly color the vertices. 

On the other hand, this construction cannot give us non-trivial examples for $r$-uniform hypergraphs with $r\ge 4$.  Thomassen shows in~\cite{T} that all $r$-uniform $r$-regular hypergraphs are bipartite for $r\ge 4$, leaving open the possibility of a vertex coloring edge weighting from the set $\{1,2\}$ for such graphs.

\section{Upper Bounds}

For $r\ge 4$, we show that the bound of $r+1$ we provided in the last section is in fact best possible. %Notice that for linear hypergraphs without multiple edges, edges consisting of a twin set are exactly isolated edges and edges of size at most $1$.
\begin{theorem}\label{Thm:Main}
For every hypergraph $H$ with edges of order at most $r\ge 2$, and no edge consisting of a twin set, there is a weighting $\omm: E(H)\to \{1,2,\ldots,\max\{5,r+1\}\}$,
such that the induced vertex weights $\omm(v):=\sum_{e\ni v}\omm(e)$ properly color $V(H)$.
\end{theorem}

\begin{proof}
We prove the statement by induction on $n=|V(H)|$. 
In fact, we will prove a slightly stronger statement to make the induction work. The statement is stronger because we can pick $\omm'$ to be constant:\\

{\em 
For every hypergraph $H$ with all edges of order between $2$ and $r\ge 2$, and no edge consisting of a twin set, and for every weighting of the vertices $\omm ':V(H)\to \N$, there is a weighting $\omm: E(H)\to \{1,2,\ldots,\max\{5,r+1\}\}$,
such that the induced vertex weights $\omm(v):=\omm '(v)+\sum_{e\ni v}\omm(e)$ properly color $V(H)$.
}\\

The statement is easy for $n=3$, so assume that $n\ge 4$.
We may assume that there are no twin sets of size larger than 1, otherwise we can remove all but one of the vertices of such a twin set and use induction. Further, we may assume that there are no multiple edges, as we can fix a weight on one of them and forget about the edge in the further argument.
We may also assume that every vertex lies in an edge of size $2$. Otherwise, pick a vertex $v$ which is in no edge of size $2$, and consider $H-v$. The only edges changed by this are edges containing $v$. If one of these edges now consists of a twin set, then that twin set was a twin set of size larger than $1$ in $H$, a contradiction.
Any edge weighting inducing a proper coloring on $H-v$ then induces a proper coloring on $H$ as well. 
%Note that this also implies that $H$ contains no pair of twins, as these would have to be in a common $2$-edge, and by linearity this edge would be isolated.
%Further, we may assume that H$ is connected, otherwise we can argue component-wise.

The main idea of the proof is as follows. We order the vertices in a specific ordering and then only consider an associated graph on the same vertex set, and  all edges consisting of the first two vertices in each edge of $H$. Then we proceed very similarly to the proof in~\cite{KKP} to weight these edges respecting the order of the vertices, guaranteeing that in the end, the first two vertices of each edge in $H$ have different weighted degrees. Minor shortcomings we can fix in the end.
The main difficulty in this approach lies in the fact that in the proof for graphs we must pick a vertex ordering with specific properties, but as the associated graph of the hypergraph depends on the ordering of the vertices, we cannot change this vertex ordering after restricting our attention to the associated graph. To circumvent this problem,
we very carefully pick the ordering such that the resulting associated graph already has properties very close to what we need to make the graph process work without reordering.

In this spirit, define $E_2$ to be the set of edges of size $2$.
Let $E_1\subseteq E(H)$ be the set of edges in $H$ which contain an edge of $E_2$, and let $E_0=E(H)\setminus (E_1\cup E_2)$. %Let $d_1(v)$ ($d_2(v)$) stand for the number of edges in $E_1(H_n)$ ($E_2(H_n)$) incident to $v$.
For any ordering $\pi$ of the vertices, let $E_\pi$ be the set of pairs of vertices appearing first and second with respect to $\pi$ in an edge of size at least $3$ in $H$. Now let us find a suitable ordering $\pi$, building backwards from the last vertex in the ordering.

If $H$ contains a vertex incident to at least two edges in $E_2$, make such a vertex the last vertex $v_n$, 
and pick $v_{n-1},v_{n-2}$ such that $v_{n-1}v,v_{n-2}v\in E_2$. Let $h=v_{n-2}v$, $t=2$,
and skip forward to the next paragraph of this construction. 
Now assume that $H$ contains no such vertex, so $E_2$ is a perfect matching. 
If $E_0=\emptyset$, then the proper weighting is easy. Changing the weight of an edge in $E_1$ does not impact the properness of the coloring of the contained edge in $E_2$. Since $H$ has no edge consisting of a twin set, every edge in $E_2$ has an intersection of exactly one vertex with at least one edge in $E_1$.
%, and we can greedily make all edges in $E_2$ properly colored by changing the weights of the edges in $E_1$ only. 
Note that every edge in $E_1$ can intersect at most $r-1$ edges in $E_2$, so we can greedily pick a weight for each edge in $E_1$ such that all edges in $E_2$ intersecting that edge in exactly one vertex are properly colored.
Since all edges in $E_1$ contain edges in $E_2$, this in turn makes the coloring of every edge in $E_1$ proper.
Thus, we may assume that $E_0\ne \emptyset$. 
Find a hyperedge $h\in E_0$ of minimal size $3\le t\le \frac{n}2$, and choose $\pi$ such that $h=\{v_n,v_{n-2},\ldots,v_{n-2t+2}\}$, and $v_{n-2i}v_{n-2i-1}\in E_2$ for $0\le i\le t-1$.

Then, successively for $i\ge 2t$, let $v_{n-i}$ be a vertex not in the set of previously selected vertices $\{v_{n-i+1},\ldots,v_n\}$ connected to $\{v_{n-i+1},\ldots,v_n\}$ by an edge in $E_2\cup E_\pi$, as long as  such a vertex exists. Note that at this point, $\pi$ is determined sufficiently to decide if there is an edge in $E_\pi$ between $\{v_{n-i+1},\ldots,v_n\}$ and the remaining vertices. If we arrive at $v_1$ this way, $\pi$ is determined. If the process stops before, say after assigning $i$ labels, delete the previously ordered vertices from $H$ to form a hypergraph $H'=H-\{v_{n-i+1},\ldots,v_n\}$ on $n'=n-i$ vertices. Clearly, $H'$ contains no edges of size $1$. As $H$ contains no twin set of size $2$, $H'$ contains no twin set of size $2$ either, and therefore no edges consisting of twin sets. By induction, we can find a vertex coloring edge weighting on $H'$. Similarly, let $H''=H[\{v_{n-i+1},\ldots,v_n\}]$ be the connected hypergraph induced by $H$ on the previously selected vertices. Add the weights of edges intersecting both $V(H')$ and $V(H'')$ which we computed in the weighting of $H'$ to the respective vertex weights in $H''$, and use induction to weight the edges in $H''$, finishing the proof. Thus, we may assume in the following that $\pi$ is completely determined, and let 
 $G=G_\pi$  be the graph with edge set $E_2\cup E_\pi$.

%Let $G_1=G[\{v_1,v_2\ldots,v_{n-2t+4}\}]$, and notice that $G_1$ is connected.
Notice that $G$  is ordered in a way that every vertex up to $v_{n-2t+3}$ has a neighbor later in the order.
 
 Now we assign weights very similarly to the proof in~\cite{KKP}, we repeat large parts of this proof here so that this article is self contained. When we assign a weight to an edge in $G$, we are assigning it at the same time to the edge in $H$ that corresponds to the edge in $G$. 

%==================

%We may assume that $G$ is connected, otherwise we can argue component-wise. If $|G|\le 1$, there is nothing to prove, so we may assume that $|G|\ge 3$ and $G$ contains a vertex with degree $\ge 2$. Order the vertices $V(G)=\{ v_1,v_2,\ldots,v_n\}$ so that $d(v_n)\ge 2$ and for every $1\le i\le n-1$, $v_i$ has a neighbor in $\{ v_{i+1},v_{i+2},\ldots, v_n\}$.

%Let $G[\{v_1,\ldots,v_s\}]$ be the first component of $G$, where $s=n$ or $s=n-2t+4$.
%Let $v_s$ be the last vertex in $G_1$, i.e., $s\in \{n-2t+4,n\}$. 
We start by assigning the provisional weight $\om(e)=3$ to every edge $e$ in $G$ and adjust it at most twice while going through the vertices in the order specified by $\pi$---once when we are considering the first vertex in the edge, and once when we consider the second vertex. To every vertex $v_i$ with $i<n-2t+2$, we will assign a set of two colors $W(v_i)=\{w(v_i), w(v_i)+2\}$ with $w(v_i)\in \{ 0,1\}\mod 4$, so that for every edge $v_jv_i\in E(G)$ with $1\le j<i$, we have $W(v_j)\cap W(v_i)=\emptyset$, and we will guarantee that $\om(v_i)=\om'(v_i)+\sum_{e\ni v_i}\om(e)\in W(v_i)$. 
%Finally we will adjust the weights of the edges incident to $v_s$ in $G$ to make sure that $\om(v_s)$ is different from $\om(v_i)$ for all $v_i\in N_{G_1}(v_s)$.

To this end, let $\om(v_1)=\omm '(v_1)+3d_H(v_1)$, and pick the set $W(v_1)=\{ w(v_1),w(v_1)+2\}$ so that $\om(v_1)\in W(v_1)$ and $w(v_1)\in \{ 0,1\}\mod 4$.
Let $2\le k\le n-2t+1$ and assume that we have picked $W(v_i)$ for all $i<k$ and
\begin{itemize}
\item $\om(v_i)\in W(v_i)$ for $i<k$,
\item $\om(v_kv_j)=3$ for all edges in $G$ with $j>k$, and
\item if $\om(v_iv_k)\ne 3$ for some edge in $G$ with $i<k$, then $\om(v_iv_k)=2$ and $\om(v_i)=w(v_i)$ or $\om(v_iv_k)=4$ and $\om(v_i)=w(v_i)+2$.
\end{itemize}
%
% (and $\om(v_i)\in W(v_i)$), all edges $v_kv_j$ with $k<j$ still have $\om(v_kv_j)=3$, and if $\om(v_jv_k)\ne 3$ for $j<k$, then $\om(v_jv_k)=2$ and $\om(v_j)=w(v_j)$ or $\om(v_jv_k)=4$ and $\om(v_j)=w(v_j)+2$.

If $v_iv_k\in E(G)$ for some $i<k$ we can either add or subtract $2$ to $\om(v_iv_k)$ keeping $\om(v_i)\in W(v_i)$. If $v_k$ has $d$ such neighbors, this gives us a total of $d+1$ choices (all of the same parity) for $\om(v_k)$. In addition to this we will allow to alter the weight $\om(v_kv_j)$ by $1$, where $j>k$ is smallest such that $v_kv_j\in E(G)$. This way, $\om(v_k)$ can take all integer values of an interval $[a,a+2d+2]$. We want to adjust the weights and assign $w(v_k)$ so that
\begin{enumerate}
\item $\om(v_i)\in W(v_i)$ for $1\le i\le k$,
\item $w(v_i)\ne w(v_k)$ for $v_iv_k\in E(G)$ with $i<k$, and\label{con1}
\item either $\om(v_k)=w(v_k)$ and $\om(v_kv_j)\in \{2,3\}$ or $\om(v_k)=w(v_k)+2$ and $\om(v_kv_j)\in \{3,4\}$.\label{con2}
\end{enumerate}
Condition~(\ref{con1}) can block at most $2d$ values in $[a,a+2d+2]$, and condition~(\ref{con2}) can block only the values $a$ and $a+2d+2$ (for all other values $\om(v_k)$ with $\om(v_kv_j)\ne 3$, we have the choice between $\om(v_kv_j)=2$ and $\om(v_kv_j)=4$). At least one value remains open for $\om(v_k)$.

This way, we can assign the sets $W(v_k)$ step by step for all $k\le n-2t+1$ without conflicts. Note that the first time $\om(v_k)$ may get changed by an adjustment of an edge $v_kv_i$ for $i>k$ is when $i=j$, so we don't run into problems with edges weighted $2$ or $4$.

%As the final step, we have to find an open weight for $v_s$. 
%%This finalizes $\om$ and finishes the proof by setting $\omm=\om$.
%
%If $s=n$, then this finalizes the weighting of the edges, and we have only used edge weights from the set $\{1,2,\ldots,5\}$.
%%, and we have not used the fact that $H$ is linear in this part of the proof.

As the final step, we make sure that the remaining edges are also colored properly by the weighting. 
We consider the cases $t=2$ and $t\ge 3$ separately, starting with $t=2$.
In this case, continue the previous process until $v_{n-1}$ in $G$, which is possible as both $v_{n-2}$ and $v_{n-1}$ are adjacent to $v_n$. Now we have to find an open weight for $v_n$.
%This time, we don't have an extra edge to work with, but we don't have to worry about later vertices.
If $v_iv_n\in E(G)$ for some $i<n$ we can again either add or subtract $2$ to $\om(v_iv_n)$ keeping $\om(v_i)\in W(v_i)$.
% Note that, for $i < n$, if $f(v_i)\in{0,1} \mod 4$ then we can add $2$ to $f(v_iv_n)$ and if $f(v_i)\in {2,3} \mod 4$ then we can subtract $2$. 
These possible adjustments give a total of $d_{G}(v_n)+1\ge 3$ options (all of the same parity) for $\om(v_n)$. Hence if the smallest such
% Adjusting the edges incident to $v_n$ by $2$ like above gives us a total of $d(v_n)+1\ge 3$ options (all of the same parity) for $\om(v_n)$.
% If the smallest such
option $a$ has $a\in\{2,3\}\mod 4$, then picking the lower possible weight on each edge incident to $v_n$ gives a proper coloring of the vertices. If $a\in\{0,1\}\mod 4$ and there is a $v_i\in N(v_n)$ with $w(v_i)\ne a$, then picking the higher weight on $v_iv_n$ and the lower weight on all other edges gives $\om(v_n)=a+2$ in a proper coloring. Finally, if $a\in\{0,1\}\mod 4$ and $w(v_i)= a$ for all $v_i\in N(v_n)$, picking the higher weight on at least two edges gives a proper coloring.

%In fact, we will stop our weighting procedure a little bit earlier, after we have considered $v_{s-3}=v_{n-2t+1}$, the predecessor of the first vertex of $h$.
For $t\ge 3$, observe that after dealing with $v_{n-2t+1}$, every edge in $H$ which contains at least two vertices in $\{v_1,v_2,\ldots,v_{n-2t+1}\}$ is properly colored, and we will not change the weights of these vertices in the remainder of the proof. Every remaining edge in $E_1$ contains at least one of the pairs $\{v_{n-2i},v_{n-2i-1}\}$ for $0\le i\le t-1$. Every remaining edge in $E_0$ contains at least $t-1$ vertices in $\{v_{n-2t+2},\ldots,v_n\}$ by the minimality of $t=|h|$, and therefore at least one of the pairs $\{v_{n-2t+2},v_{n-3}\}$, $\{v_{n-2t+2},v_{n-2}\}$,  $\{v_{n-2t+2},v_{n-1}\}$, $\{v_{n-2t+2},v_{n}\}$, $\{v_{n-3},v_{n-1}\}$, $\{v_{n-3},v_{n}\}$, $\{v_{n-2},v_{n-1}\}$,  $\{v_{n-2},v_{n}\}$. Therefore, if we adjust the weights of the edges $h$, $v_{n-3}v_{n-2}$ and $v_{n-1}v_n$ such that all these $t+8$ pairs receive different weights, our weighting is complete.

First, adjust $h$ such that all of the pairs $\{v_{n-2i},v_{n-2i-1}\}$ for $0\le i\le t-1$ are colored properly. As we have at least $r+1\ge t+1$ weights to choose from, this is possible. 
From each of the at least five choices for  $\om(v_{n-3}v_{n-2})$ and $\om(v_{n-1}v_{n})$, at least three choices will guarantee that $\om(v_{n-2t+2})\ne \om(v_{n-i})$ for $0\le i\le 3$. So let $x_1<x_2<x_3$ be such remaining choices for $\om(v_{n-3}v_{n-2})$, and let $y_1<y_2<y_3$ be such remaining choices for $\om(v_{n-1}v_{n})$.
Notice that $y_1-x_3<y_1-x_2<y_1-x_3<y_2-x_3<y_3-x_3$, so we have at least five different options for $\om(v_{n-1}v_{n})-\om(v_{n-3}v_{n-2})$ at this point. Therefore, at least one of these options will yield a result with $\om(v_{n-i})\ne \om(v_{n-j})$ for all $0\le i\le 1$, $2\le j\le 3$. This proves the theorem.
\end{proof}

\section{Total Colorings}\label{total}
In this short section, we generalize Theorem~\ref{tot123}  % and~\ref{tot123list} 
to hypergraphs. While a generalization of the stronger Theorem~\ref{tot123list} to hypergraphs would be very interesting, we are not able to prove such a statement at this point.

\begin{theorem}
For every hypergraph $H$ without edges of size $0$ or $1$, there are weightings $\omm: E(H)\to \{1,2,3\}$ and $\omm': V(H)\to \{1,2\}$
such that the induced total vertex weights $w(v):=\omm'(v)+\sum_{e\ni v}\omm(e)$ properly color $V(H)$.
\end{theorem}

\begin{proof}
In comparison with the previous theorem, the proof of Theorem~\ref{tot123} generalizes to hypergraphs with even fewer complications. To this end, order the vertices of $H$ in any order $\pi$, and consider the multigraph $G=G_{\pi}$ as above, i.e. the graph on the same vertex set with edges $u_ev_e$, where $u_e$ and $v_e$ are the first and second vertex appearing in some edge $e\in E(H)$. We will show the stronger statement that we can find a weighting such that $w(u_e)\ne w(v_e)$ for every edge $e\in E(H)$.

%Now proceed with the proof of Theorem~\ref{tot123} on $G$, taking the whole edges of $H$ into account. %, i.e.:
%\begin{enumerate}
%\item 
Start with all edges weighted $2$, and all vertices weighted $1$ to give a temporary total weight to every vertex.
%\item 
Change the weights of edges and vertices greedily going through the vertices in the order given by $\pi$. When we consider a vertex $v$, the only edge weights we may adjust are those of edges $e$ with $v_e=v$, and the vertex weights of $u_e$ and $v_e$ in these edges.
%\item 
Note that we can adjust the weight of every such edge $e$ by either $+1$ or $-1$, and change the weight of $u_e$ at the same time to keep the total weight of $u_e$ unchanged, always using only allowable weights.
%\item 
This gives us sufficiently many options for the total weight of $v$ to make sure that this total weight is different from all the total weights of  the vertices $u_e$ with $v_e=v$ (if there are $d$ such vertices, we have $d+1$ different options for the total weight of $v$).
As the total weight of $v$ is fixed for the remainder of the process, we end up with a weighting with the desired properties.
%\end{enumerate}
\end{proof}

\section{Conclusion and Open Questions}

The only class of hypergraphs we know achieving the bound in Theorem~\ref{Thm:Main} is the one constructed above, stemming from the complete graphs $K_{r+1}$. Possibly, it is true that this is the unique example for $r\ge 3$, and in all other cases a set $\{1,2,\ldots,r\}$ is sufficient.

Note that most of our examples on the lower bounds are highly non-uniform, they contain very small and very large edges. 
For $r$-uniform hypergraphs, there may be a constant upper bound instead, independent of $r$. For $r=3$, we conjecture the following.
\begin{conjecture}\label{con3}
For every $3$-uniform hypergraph $H$ without an isolated edge, there is a weighting $\omm: E(H)\to \{1,2,3\}$,
such that the induced vertex weights $\omm(v):=\sum_{e\ni v}\omm(e)$ properly color $V(H)$.
\end{conjecture}
At some point we thought that it may even be true that for $r\ge 4$, the set $\{1,2\}$ is sufficient, and in fact asked this question in a previous version of this manuscript. 
Then Bennett, Dudek, Frieze and Helenius~\cite{BenDFH} constructed a family of  uniform hypergraphs for all $r$ which require the set $\{1,2,3\}$, and extended Conjecture~\ref{con3} accordingly. 
\begin{conjecture}\cite{BenDFH}
For every $r$-uniform hypergraph $H$ without an isolated edge, there is a weighting $\omm: E(H)\to \{1,2,3\}$,
such that the induced vertex weights $\omm(v):=\sum_{e\ni v}\omm(e)$ properly color $V(H)$.
\end{conjecture}
The construction in~\cite{BenDFH} is as follows. Start with a cycle of length $4k+2$, and transform every $2$-edge into an $r$-edge by producing $(r-2)$ twins of every other vertex in the cycle. As the cycle cannot be properly colored by an edge weighting from $\{1,2\}$, this is impossible for this hypergraph as well. If we forbid this type of construction by forbidding twins, we think the following is true.
\begin{conjecture}
For large enough $r$, for every $r$-uniform hypergraph $H$ without twins, there is a weighting $\omm: E(H)\to \{1,2\}$,
such that the induced vertex weights $\omm(v):=\sum_{e\ni v}\omm(e)$ properly color $V(H)$.
\end{conjecture}

%\begin{conjecture}
%For $r$ large enough, for every $r$-uniform hypergraph $H$ without an isolated edge, there is a weighting $\omm: E(H)\to \{1,2\}$,
%such that the induced vertex weights $\omm(v):=\sum_{e\ni v}\omm(e)$ properly color $V(H)$.
%\end{conjecture}

%As mentioned above, list versions of all the above questions may be asked

%Another interesting question is the list version of the problem. Instead of choosing from weights $\{ 1,2,\ldots,k\}$ on each edge, each edge has its own list of $k$ different available weights. A natural strategy to attack the 1-2-3-Conjecture is Alon's Combinatorial Nullstellensatz (see~\cite{A}), and in its most basic application one would prove the list version. For graphs, this approach has been successful only for special graph classes, for general graphs no sufficient $k$ is known.
%
%One area where the Nullstellensatz has been used successfully is a related problem on total weightings. Here, the approach used for graphs by Zhu and Wong~\cite{ZW} can also be carried over to hypergraphs, see~\cite{P}.

 \bibliographystyle{amsplain}

\begin{thebibliography}{99}
%\bibitem

\bibitem{A} N. Alon, Combinatorial Nullstellensatz, { Combin. Prob. Comput.} { 8} (1999), 7--29.  
\bibitem{BGN} T. Bartnicki, J. Grytczuk, S. Niwczyk, Weight choosability of graphs, { J. Graph Theory} { 60}, (2009), 242--256.
\bibitem{BenDFH} P. Bennett, A. Dudek, A. Frieze, L. Helenius, The 1-2-3 Conjecture for Uniform Hypergraphs, preprint, arXiv:1511.04569v1.
\bibitem{CJLORS} G. Chartrand, M.S. Jacobson, J. Lehel, O.R.
Oellermann, S. Ruiz, and F. Saba, Irregular Networks, { Congressus
Numerantium} { 64} (1988), 187--192.
%\bibitem{D} R. Diestel, ``{\em Graph Theory},'' Springer Verlag, Heidelberg, 2005.
\bibitem{K} M. Kalkowski, A note on the 1,2-conjecture, in Ph.D. Thesis, 2009.
\bibitem{KKP} M. Kalkowski, M. Karo\'nski, F. Pfender, Vertex-coloring edge-weightings: Towards the 1-2-3 conjecture, {  Journal of Combinatorial Theory (B)} { 100} (2010), 347--349. 
 \bibitem{KLT}  M. Karo\'nski, T. \L uczak and A. Thomason,  Edge weights and vertex colours,  { J. Combinatorial Theory (B)}  { 91} (2004), 151--157.
 %\bibitem{P} F. Pfender, Total weight choosability in hypergraphs,  	arXiv:1312.6329 [math.CO].
 \bibitem{PW} J. Przyby{\l}o, M. Wo\'{z}niak, On a 1-2 conjecture, { Discrete Mathematics and Theoretical Computer Science} { 12} (2010), 101--108.
 \bibitem{PW2} J. Przyby{\l}o, M. Wo\'{z}niak, Total weight choosability of graphs, { Electronic J. Combinatorics} { 18} (2011), {\#}P112.
 \bibitem{T} C. Thomassen, The even cycle problem for directed graphs, { Journal of the AMS} { 5} (1992), 218--229.
%\bibitem{WY} T. Wang and Q. Yu, On vertex-coloring $13$-edge-weighting, {\em Front. Math. China} {\bf 3} (2008), 1-7.
\bibitem{WZ} T. Wong, X. Zhu, Total weight choosability of graphs, { J. Graph Theory} { 66} (2011), 198--212.
\bibitem{ZW} X. Zhu, T. Wong, Every graph is $(2,3)$-choosable, { Combinatorica} (2014), 1--7.
 \end{thebibliography}

\end{document}